\documentclass[12pt, a4paper]{amsart}
\usepackage{amsmath,amssymb,amscd,amsfonts}
\usepackage{ifthen}
\usepackage[T1]{fontenc}
\usepackage[utf8]{inputenc}
\usepackage[all]{xy}
\usepackage{graphicx}
\usepackage{enumerate}
\usepackage{xspace}
\usepackage{pdfsync}
\usepackage{epic}
\usepackage{dsfont}

\newtheorem{theorem}{Theorem}
\newtheorem{remark}[theorem]{Remark}
\newtheorem{definition}[theorem]{Definition}
\newtheorem{lemma}[theorem]{Lemma}

\newtheorem{proposition}[theorem]{Proposition}
\newtheorem{conjecture}[theorem]{Conjecture}
\newtheorem{corollary}[theorem]{Corollary}
\newtheorem{example}[theorem]{Example}

\begin{document}
\title[]{Kodaira additivity, birational isotriviality and specialness. }

\author{Fr\'ed\'eric Campana}
\address{Universit\'e de Lorraine \\
 Institut Elie Cartan\\
Nancy, France}

\email{frederic.campana@univ-lorraine.fr}



\date{\today}

\begin{abstract} We show that a smooth projective fibration $f:X\to Y$ between connected complex quasi-projective manifolds satisfies the equality $\overline{\kappa}(X)=\kappa(X_y)+\overline{\kappa}(Y)$ of Logarithmic Kodaira dimensions if its fibres $X_y$ have semi-ample canonical bundles. Without the semi-ampleness assumption, this additivity was conjectured in \cite{Pop14}. Several cases are established in \cite{PS22}, which inspired the present text. Although the present results overlap with those of \cite{PS22} in the projective case, the approach here is different, based on the r\^ole played by birationally isotrivial fibrations, special manifolds and the core map of $Y$ introduced and constructed in \cite{Ca07}. 
\end{abstract}

\maketitle

\tableofcontents

\section{Introduction}

The base field is $\mathbb{C}$, the field of complex numbers. For $X$ quasi-projective $X$, we define $\overline{\kappa}(X)$ in the usual way: $\overline{\kappa}(X)=\kappa(\overline{X},K_{\overline{X}}+D)$ if $\overline{X}$ is any projective smooth `good' compactification of $X$, that is: such that $D=\overline{X}\setminus X$ is a divisor of simple normal crossings in $\overline{X}$. 

\begin{theorem}\label{+} Let $f:X\to Y$ be a submersive projective holomorphic map between two connected complex quasi-projective manifolds $X$ and $Y$. Assume that its fibres $X_y$ have semi-ample canonical bundles.

Then `addivity' holds, which means that: $\overline{\kappa}(X)=\kappa(X_y)+\overline{\kappa}(Y)$.

In particular, $\overline{\kappa}(X)=-\infty$ if $\overline{\kappa}(Y)=-\infty$. \end{theorem}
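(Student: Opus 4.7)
My plan is to establish the two inequalities $\overline{\kappa}(X) \geq \kappa(X_y) + \overline{\kappa}(Y)$ and $\overline{\kappa}(X) \leq \kappa(X_y) + \overline{\kappa}(Y)$ separately. The lower bound is a logarithmic form of Iitaka's $C_{n,m}$ conjecture and, under the good minimal model hypothesis on fibres, is accessible via the positivity machinery of \cite{T21} together with standard log-Iitaka arguments as in \cite{PS22}. The essentially new input is the matching upper bound, which I would prove by a reduction, via the core map of \cite{Ca07}, to the case of a special base.

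Concretely, let $c_Y : Y \dashrightarrow Z$ be the core map of $Y$. After a suitable birational modification we may assume it is a morphism whose general fibre $Y_z$ is a special manifold and whose base $Z$ is of logarithmic general type. A special manifold admits no dominant rational map to a positive-dimensional variety of log-general type, so its Iitaka fibration is trivial and $\overline{\kappa}(Y_z) \leq 0$; combined with log easy addition applied to $c_Y$ itself, this forces $\overline{\kappa}(Y) = \dim Z = \overline{\kappa}(Z)$. Applying log easy addition to the composition $g = c_Y \circ f : X \to Z$ then reduces the desired upper bound to the inequality $\overline{\kappa}(X_z) \leq \kappa(X_y)$ for $z \in Z$ general.

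Finally, the restriction $f_z : X_z \to Y_z$ is a smooth projective fibration with the same fibres as $f$, hence still with good minimal models, and with a special base. This is precisely the setting of \cite{T21}, which forces $f_z$ to be birationally isotrivial: there is a finite étale cover $\tilde Y_z \to Y_z$ such that $X_z \times_{Y_z} \tilde Y_z$ is birational to the product $X_y \times \tilde Y_z$. Since étale covers and birational equivalence preserve $\overline{\kappa}$, one computes $\overline{\kappa}(X_z) = \kappa(X_y) + \overline{\kappa}(Y_z) \leq \kappa(X_y)$, and combined with easy addition for $g$ this gives the sought upper bound. The main obstacle I anticipate is the bookkeeping needed to keep the good minimal model and specialness hypotheses intact through the birational and logarithmic modifications required to make $c_Y$ a morphism and to put the restricted fibration $f_z$ into the exact form demanded by \cite{T21}, and to verify that the quasi-projective versions of log easy addition and of the identity $\overline{\kappa}(F \times W) = \kappa(F) + \overline{\kappa}(W)$ are available in the generality of the statement.
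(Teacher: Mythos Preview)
Your proposal contains a genuine gap: the assertion that a special manifold $Y_z$ has $\overline{\kappa}(Y_z)\leq 0$ is false. Specialness means that no fibration on $Y_z$ has an \emph{orbifold} base of general type; it does \emph{not} force the Iitaka fibration to be trivial. The paper itself records explicit examples of special projective manifolds of any Kodaira dimension $k\in\{0,1,\dots,\dim Y-1\}$ (smooth divisors of bidegree $(d,p-k+2)$ in $\mathbb{P}_k\times\mathbb{P}_{p-k+1}$). Once $\overline{\kappa}(Y_z)$ can be positive, both of your key steps collapse: the identity $\overline{\kappa}(Y)=\dim Z$ no longer follows from easy addition, and the inequality $\overline{\kappa}(X_z)=\kappa(X_y)+\overline{\kappa}(Y_z)\leq \kappa(X_y)$ fails. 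A related slip is that the base of the core map is of general type only as an \emph{orbifold pair} $(C,\Delta_{c_Y})$; the underlying $Z$ need not have $\overline{\kappa}(Z)=\dim Z$, so your appeal to ``$Z$ of logarithmic general type'' is also unjustified.

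The paper's argument avoids precisely this trap. Instead of bounding $\overline{\kappa}(X)$ from above by easy addition, it applies the \emph{orbifold} version of Viehweg additivity (Theorem~\ref{Vieh}): because the orbifold base $(C,\Delta_{c_Y})$ of the core map is of general type, one gets the \emph{equalities} $\overline{\kappa}(Y)=\overline{\kappa}(Y_c)+\dim C$ and, after checking $\Delta_{c_Y\circ f}\geq \Delta_{c_Y}$ (Lemma~\ref{>}), also $\overline{\kappa}(X)=\overline{\kappa}(X_c)+\dim C$. Subtracting these eliminates both $\dim C$ and the unknown quantity $\overline{\kappa}(Y_c)$, leaving $\overline{\kappa}(X)-\overline{\kappa}(Y)=\overline{\kappa}(X_c)-\overline{\kappa}(Y_c)=\kappa(X_y)$, the last step being the birationally isotrivial case over the special fibre $Y_c$. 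No separate upper/lower bound argument is needed, and no control on $\overline{\kappa}(Y_c)$ beyond what isotriviality gives.
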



\begin{remark}1. If one applies \cite{T21}, Theorem 1.1 by B. Taji, instead of \cite{AC}, the very same proof works with the weaker hypothesis that the $X_y's$ have good minimal models, instead of semi-ample canonical bundle. The abundance conjecture then implies \cite{Pop14}, Conjecture 3.4. 

2. In \cite{PS22}, Theorems C and H, the authors give a different proof in the projective case, assuming other versions of the abundance conjecture.

3.  Theorem A of \cite{PS22} says that if $f:X\to Y$ is a smooth fibration between projective manifolds such that $\kappa(X_y)\geq 0$ and $\kappa(X)=\kappa(X_y)+dim(Y)$, then $\kappa(Y)=dim(Y)$. Theorem \ref{+} implies this, but only under the much stronger semi-ampleness hypothesis of $K_{X_y}$.

4. By \cite{Kaw}, the $C_{n,m}$ inequality is true when the generic fibre has good minimal models, without the smoothness hypothesis. 
\end{remark}

The proof of Theorem \ref{+} is an easy combination of the following steps, which we describe here when $X,Y$ are projective ( the quasi-projective case is proved along the same lines in \S\ref{open}):

\medskip

1. The additivity is valid when $f$ is birationally isotrivial, , which means that there exists a generically finite base-change $v:Y_1\to Y$ such that $f_1:X_1\to Y_1$ is birational to $F\times Y_1$ over $Y_1$, for some $F$, birational to the fibres $X_y$ of $f$. See \S2 for the proof.

\medskip

The next steps depend on `Orbifold geometry' notions (`specialness') and results (the `core map'), originating in \cite{Ca07} and recalled in \S3.

\medskip

2. By \cite{AC}\footnote{As already said, when $X_y$ has a good minimal model, we could invoke \cite{T21} instead, without changing anything to the proof.}, if $Y$ is `special', then $f$ is birationally isotrivial.

\medskip

3. We then apply the `core map' $c_Y:Y\to (C,\Delta_{c_Y})$, which has special fibres and general type `orbifold base' $(C,\Delta_{c_Y})$.

\medskip

4. The first and second steps applied to $f_c:X_c\to Y_c$, the restriction of $f$ over a general point $c\in C$, show that $\kappa(X_c)=\kappa(X_y)+\kappa(Y_c)$. Indeed: $Y_c$ is special, and $f_c$ is thus birationally isotrivial. 

\medskip

5. The `orbifold' version of Viehweg's additivity theorem (\cite{V}, \cite{Ca07}), according to which the additivity for Kodaira dimensions is valid when the `orbifold base' $(C,\Delta_g)$ of the fibration $g:Z\to C$ is of general type, whatever the fibres, implies that: 

$\kappa(Y)=\kappa(Y_c)+dim(C)$, and:

$\kappa(X)=\kappa(X_c)+dim(C)$, since $\kappa(C,\Delta_{c_Y\circ f})=dim(C)$ (Lemma \ref{>}).

6. Combining steps 4 and 5, we get: 

$\kappa(X)-\kappa(Y)=\kappa(X_c)-\kappa(Y_c)=\kappa(X_y)$, as claimed.

\medskip

 {\bf Acknowledgement:} I heartily thank M. P\u aun for his careful reading of the text.

 \section{Specialness and  birational isotriviality.}
 
 \medskip
 
{\bf Convention:} A projective fibration $f:X\to Y$ is a projective surjective morphism with connected fibres between smooth connected quasi-projective manifolds $X$ and $Y$. A fibre $X_y=f^{-1}(y)$ of $f$ is `general' if $y$ is so in $Y$, that is: if $y$ lies outside countably many suitable Zariski closed strict subsets of $Y$.

 \subsection{The birationally isotrivial case.}\label{birisot}

 \begin{definition} Let $f:X\to Y$ be a projective fibration between smooth connected quasi-projective $X,Y$. We say that $f$ is birationally isotrivial if there exists some finite morphism $v: Y'\to Y$ such that, if $f':X'\to Y'$ is deduced from $f$ by the base-change $v$, then $X'$ is birational to $F\times Y'$ over $Y'$. Equivalently, this means that all  generic $X_y$ are birational to some fixed $F$. 
  \end{definition}

{\bf Notation:} Let $(\overline{X},D)$ be a good compactification of a quasi-projective smooth $X$. We then write $\overline{K}_X:=K_{\overline{X}}+D$ so that $\overline{\kappa}(X)=\kappa(\overline{X},\overline{K}_X)$. If $f:X\to Y$ is a projective fibration between smooth quasi-projective $X,Y$, which is the restriction of some fibration $f:\overline{X}\to \overline{Y}$, where $(\overline{X},D)$ and $(\overline{Y},D_Y)$ are good compactifications of $X,Y$ such that $f^{-1}(D_Y)=D$, we also write: $\overline{K}_{X/Y}:=\overline{K}_X-f^*(\overline{K}_Y)=K_{\overline{X}}+D-f^*(K_{\overline{Y}}+D_Y))$.

 \begin{theorem}\label{thmisot}Let $f:X\to Y$ be a smooth, projective,and  birationally isotrivial fibration between smooth connected complex quasi-projective manifolds. Then $\overline{\kappa}(X)=\kappa(X_y)+\overline{\kappa}(Y)$. More generally:
 
  $\kappa(\overline{X}, \overline{K}_{X/Y}+f^*(L))=\kappa(X_y)+\kappa(\overline{Y},L)$, for any line bundle $L$ on $\overline{Y}$.
 \end{theorem}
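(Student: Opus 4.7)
The approach is to reduce to a product situation via the trivializing cover, apply K\"unneth there, and transfer back using invariance of Iitaka dimension under finite surjective pullback. After replacing $v:Y'\to Y$ by a suitable Galois subcover, I would assume $v$ is finite \'etale. Choose good snc compactifications $\overline Y\supset Y$ and $\overline{Y'}\supset Y'$ so that $v$ extends to a finite $\overline v:\overline{Y'}\to\overline Y$, ramified only along $D_Y$ (as $v$ is \'etale), with $D_{Y'}$ the reduced preimage of $D_Y$. Similarly choose compactifications $\overline X$ of $X$ and $\overline{X'}$ of $X':=X\times_Y Y'$ with extensions $\overline f,\overline{f'}$ and a generically finite $\overline w:\overline{X'}\to\overline X$ commuting with the projections, with $D_X=\overline f^{-1}(D_Y)$ and $D_{X'}=\overline{f'}^{-1}(D_{Y'})$ as reduced divisors.

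For the base change step, logarithmic Riemann--Hurwitz applied to $\overline v$, together with $D_{Y'}$ being the reduced preimage of $D_Y$, yields $\overline v^*\overline K_Y=\overline K_{Y'}$; the analogous identity $\overline w^*\overline K_X=\overline K_{X'}$ holds for the same reason, and subtracting gives $\overline w^*\overline K_{X/Y}=\overline K_{X'/Y'}$. Hence $\overline w^*(\overline K_{X/Y}+f^*L)=\overline K_{X'/Y'}+f'^*\overline v^*L$. Since $\overline v$ and $\overline w$ are finite surjective between normal varieties, pulling back preserves Iitaka dimensions of line bundles, so $\kappa(\overline X,\overline K_{X/Y}+f^*L)=\kappa(\overline{X'},\overline K_{X'/Y'}+f'^*\overline v^*L)$ and $\kappa(\overline{Y'},\overline v^*L)=\kappa(\overline Y,L)$.

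For the product step, $X'$ is $Y'$-birational to $F\times Y'$, so, picking a good snc compactification $\overline F$ of $F$, the product $\overline F\times\overline{Y'}$ with its natural boundary is a good compactification of $F\times Y'$; replacing $\overline{X'}$ by a common log resolution preserves Iitaka dimensions. On the product $\overline K_{F\times Y'/Y'}=p_F^*\overline K_F$ and $f'=p_{Y'}$, so a K\"unneth computation of pluricanonical sections splits the Iitaka dimension as $\kappa(\overline F\times\overline{Y'},p_F^*\overline K_F+p_{Y'}^*\overline v^*L)=\overline\kappa(F)+\kappa(\overline{Y'},\overline v^*L)=\kappa(X_y)+\kappa(\overline Y,L)$, where $\overline\kappa(F)=\kappa(X_y)$ because $F$ may be chosen birational to the smooth projective fibre $X_y$. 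Combining with the previous step gives the claim.

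The main delicate point is the base change step: one must set up the compactifications so that the reduced-preimage identities on boundaries hold exactly, otherwise logarithmic Riemann--Hurwitz contributes effective exceptional discrepancies that could alter Iitaka dimensions and break the chain of equalities; the reduction to an \'etale $v$ is the essential input that makes these identities available. Once this bookkeeping is set up, both the finite-pullback invariance of $\kappa$ and the K\"unneth splitting on the product are routine.
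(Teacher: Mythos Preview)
The overall strategy—pass to the trivializing cover, compute on the product via K\"unneth, and transfer back using invariance of $\kappa$ under generically finite pullback—is exactly the paper's. The gap is your reduction to an \'etale $v$. Birational isotriviality only furnishes a finite (or generically finite) base change $v:Y'\to Y$, and its branch locus may well meet $Y$ itself, not just the boundary $D_Y$; passing to a ``Galois subcover'' (or the Galois closure) does not remove ramification. Nor can you shrink $Y$ to excise the branch locus, since that changes $\overline\kappa(Y)$ and $\overline\kappa(X)$, which are the invariants in the statement. Your identities $\overline v^*\overline K_Y=\overline K_{Y'}$ and $\overline w^*\overline K_X=\overline K_{X'}$ rest entirely on $v$ being \'etale over $Y$ (so that all ramification of $\overline v$ lies over $D_Y$); without this, logarithmic Riemann--Hurwitz produces genuine ramification terms supported inside $Y'$, and the chain of equalities breaks.

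The paper confronts exactly this point. It allows $v$ to be an arbitrary generically finite map and, instead of the clean pullback identities you assume, proves (its Lemma~\ref{loggenfinite}) that $\overline K_{X'/X}=E_u+K^+_{X'/X}$ and $\overline K_{Y'/Y}=E_v+K^+_{Y'/Y}$ with $E_u,E_v$ effective and exceptional. The smoothness of $f$ is used not to make $w$ \'etale, but to force $K_{X'/Y'}=u^*(K_{X/Y})$ over the open part, hence $K^+_{X'/X}=(f')^*K^+_{Y'/Y}$: the ramification contributions of $v$ on base and total space cancel in the \emph{relative} log canonical, up to exceptional divisors which do not affect $\kappa$. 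That cancellation is the substance of the argument; your \'etale shortcut assumes it away.
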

 
 \begin{proof} $\bullet$ We first give the proof in the simpler case when $X,Y$ are projective, in order to avoid the technicalities due to compactifications. 
 
 By assumption, there is a generically finite base change $v:Y'\to Y$, with $Y'$ smooth, such that $f':X'\to Y'$, which is still smooth, is birational to $X_0:=F\times Y'$, for some $F$ birational to any of the $X_y's$. Let $p:X_0\to Y'$ and $q:X_0\to F$ be the natural projections. Let $u:X'\to X$ be the resulting generically finite map, and $u':X"\to X, \beta:X"\to X_0$ be birational maps with $X"$ smooth, such that $p\circ \beta=f'\circ u':=f":X"\to Y'$. We thus get the following diagram:
 
$ \begin{matrix}
\xymatrix{X"\ar[rd]^{u'}\ar[d]^{\beta}\\
X_0=F\times Y'\ar[rd]^{p}\ar[d]^{q}&X'\ar[rr]^{u}\ar[d]_{f'} && X\ar[d]^{f}\\
F&Y'\ar[rr]^{v} & & Y\\
}
\end{matrix}$

We may, and shall, assume that $X"=X',u'=id_{X'}$. 
 
 By smoothness of $f$, we have: $K_{X'/Y'}=u^*(K_{X/Y})$. This is the point where smoothness enters the picture. The rest is a formal computation. 
 
 We compute $K_{X'/Y'}+(f\circ u)^*(L)=K_{X'/Y'}+(v\circ p \circ\beta)^*(L)$. The second expression gives:
 
 $K_{X'/Y'}+(v\circ p\circ \beta)^*(L)=K_{X'/X_0}+\beta^*(K_{X_0/Y'})+(v\circ p\circ \beta)^*(L)=K_{X'/X_0}+\beta^*(q^*(K_F)+(v\circ p)^*(L))$.
 
 Because $K_{X'/X_0}$ is an effective $\beta$-exceptional divisor, Hartog's theorem implies that $\kappa(X',K_{X'/Y'}+(v\circ p\circ \beta)^*(L))=\kappa(X',\beta^*(q^*(K_F)+(v\circ p)^*(L))= \kappa(X_0, q^*(K_F)+(v\circ p)^*(L))=\kappa(F)+\kappa(Y,L)$.
 
 We next recall: $K_{X'/Y'}+(f\circ u)^*(L)=u^*(K_{X/Y}+f^*(L))$. 
 
 We thus get: $\kappa(X',u^*(K_{X/Y}+f^*(L))=\kappa(X,K_{X/Y}+f^*(L))$.
 
 Comparing the two expressions, we get, as claimed: 
 
 $\kappa(X,K_{X/Y}+f^*(L))=\kappa(F)+\kappa(Y,L)=\kappa(X_y)+\kappa(Y,L)$.
 
 \medskip

 $\bullet$ The quasi-projective case requires additional compactification technicalities. We can, in the situation of Theorem \ref{+}, construct the following diagram of pairs and good compactifications, analog of the previous diagram:

$ \begin{matrix}
\xymatrix{(\overline{X"},D")\ar[rd]^{u'}\ar[d]^{\beta}\\
F\times (\overline{Y'},D_{Y'})\ar[rd]^{p}\ar[d]^{q}&(\overline{X'},D')\ar[rr]^{u}\ar[d]_{f'} && (\overline{X},D)\ar[d]^{f}\\
F&(\overline{Y'},D_{Y'})\ar[rr]^{v} & & (\overline{Y},D_{Y})\\
}
\end{matrix}$

The first step of the construction goes as follows: we construct an initial good compactification $(\overline{X_1},D_1)$ of $X$, then consider the Zariski-open subset $Y$ of the component $\overline{Y_1}$ of the Chow-scheme $Chow(\overline{X_1})$ which parametrises the fibres $X_y$ of $f$ and their limits in $\overline{X_1}$. Let then $\overline{X_2}\subset \overline{X_1}\times \overline{Y_1}$ be the incidence graph of the family parametrised by $\overline{Y_1}$, equipped with the natural projection $f_1:\overline{X_2}\to \overline{Y_1}$. Then $X,Y$ are Zariski open in $\overline{X_2},\overline{Y_1}$, and $f_1$ restricts to our initial $f:X\to Y$ over $X$. We then take suitable birational models $\overline{X}, \overline{Y}, f$ of $\overline{X_2},\overline{Y_1},f_1$ so that $X,Y$ admits the good compactifications $(\overline{X},D)$, $(\overline{Y},D_Y)$, and $f$ restricts to our initial $f$ over $X$, in such a way that $f^{-1}(D_Y)=D$.

Then $f':\overline{X'}\to \overline{Y'}$ is deduced from $f:\overline{X}\to \overline{Y}$ by the generically finite base-change $v:\overline{Y'}\to \overline{Y}$ followed by suitable blow-ups over $(f')^{-1}(D_{Y'})$, so that $f':X':=(\overline{X'}\setminus D')\to Y':=(\overline{Y'}\setminus D_{Y'})$ is deduced from $f:X\to Y$ by the base-change $v:Y'\to Y$ over $Y$.

Next, $u':\overline{X"}\to \overline{X'}$ and $\beta: \overline{X"}\to F\times \overline{Y'}$ are birational, $u:\overline{X'}\to \overline{X}$ and $v:\overline{Y'}\to \overline{Y}$ are generically finite.

This diagram above has then the following properties:

1. $(\overline{X},D)$ is a good compactification of $X$. The other pairs are good compactifications of $Y,X':=(\overline{X'}\setminus D'),Y':=(\overline{Y'}\setminus D_{Y'})$.

2. All maps are maps of pairs. This means that if say $h:(\overline{A},D_A)\to (\overline{B},D_B)$ is such a map, then $\overline{A}, \overline{B}$ are smooth projective, $D_A,D_B$ are simple normal crossings divisors, $h$ is surjective, and $h^{-1}(D_B)=D_A$. We thus also have: $h(D_A)=D_B$, $h(\overline{A}\setminus D_A)=(\overline{B}\setminus D_B)$, and $h^{-1}(\overline{B}\setminus D_B)=(\overline{A}\setminus D_A)$.

Recall that we defined: $K_{\overline{X'}}+D'-u^*(K_{\overline{X}}+D):=\overline{K}_{X'/X}$, as well as: $\overline{K}_{X/Y}:=\overline{K}_X-f^*(\overline{K}_Y)=K_{\overline{X}}+D-f^*(K_{\overline{Y}}+D_Y))$, and similarly for the other maps of pairs. In particular, $\overline{K}_{X'/X}:=\overline{K}_{X'}-u^*(\overline{K}_{X})=K_{\overline{X'}/\overline{X}}-D'+u^*(D)$, where $D'=u^{-1}(D)$.

\begin{lemma}\label{loggenfinite} In the preceding diagram, we have the following properties:

 1. $\overline{K}_{X'/X}=E_u+K^+_{X'/X}$, and $\overline{K}_{Y'/Y}=E_v+K^+_{Y'/Y}$, where $E_u,E_v$ are effective $u$-exceptional and $v$-exceptional divisors respectively, contained in $D'$ and $D_{Y'}$, while $K^+_{X'/X}$ and $K^+_{Y'/Y}$ are the closures of $K_{X'/X}$, and of $K_{Y'/Y}$ in $\overline{X'}$ and $\overline{Y'}$ respectively .
 
 2. $\overline{K}_{X'/Y'}=u^*(\overline{K}_{X/Y})+E_u-(f')^*(E_v)$.

 3. $\kappa(\overline{X'}, \overline{K}_{X'/Y'}+(f')^*(v^*(L)+E_v))=\kappa(\overline{X},\overline{K}_{X/Y}+f^*(L))$ if $L$ is any line bundle on $\overline{Y}$.
\end{lemma}

\begin{proof} 1. Let $E'$ be an irreducible component of $D'$, defined at its generic point $a'$ in local coordinates $(x'_1,\dots,x'_n)$ by the equation $x'_1=0$. Let the image $E$ of $E'$ be contained, near $u(a')$, in the intersection of the components $H_1,\dots H_p$ of $D$ of local equations $x_i=0,i=1,\dots,p$. Then $u^*(H_i)=(x'_1)^{t_i}.f_i(x')$ for integers $t_i>0$ if $i\leq p$, and $t_i=0$ otherwise, and regular functions $f_i$ non-vanishing near $a'$.

Thus $K_{\overline{X'}}+D'$ is generated near $a'$ by: $\frac{dx'_1}{x'_1}\wedge dx'_2\wedge \dots \wedge dx'_n$, while $u^*(K_{\overline{X}}+D)$ is generated by:

 $u^*(\frac{dx_1}{x_1}\wedge\dots \wedge \frac{dx_n}{x_n})=(t_1\frac{dx'_1}{x'_1}+g_1)\wedge\dots\wedge (t_n\frac{dx'_1}{x'_1}+g_n)=\frac{dx'_1}{x'_1}\wedge G_1+G_2$ for regular $k$-forms $g_i,G_1,G_2$ of suitable degrees. Hence the effectivity of $E_u$. 
 
 Let us now show that it is $u$-exceptional, i.e: that we have a contradiction if $E'$ is contained in $E_u$, and if $E$ is a component of $D$, i.e: if $t_i=0$ if $i>1$. Indeed, in this case, for $x'$ generic in $E'$, we may choose $x'_i=u^*(x_i)$ for $i>1$, so that $u(x'_1,\dots x'_n)=(x_1^{t_1}, x_2,\dots x_n)$, and thus: $u^*(K_{\overline{X}}+D)$ is locally generated by $(t_1\frac{dx'_1}{x'_1}\wedge dx'_2\wedge\dots \wedge dx'_n)$, and $u^*(K_{\overline{X}}+D)$ thus generates $K_{\overline{X'}}+D'$ on $E'$.

This proof applies to $\overline{K}_{Y'/Y}$ and $E_v$.

Claim 2 follows from the fact that $K^+_{X'/X}=u^*(K^+_{Y'/Y})$, since $\overline{K}_{X'/Y'}=u^*(K_{X/Y})$ over $X'$.

Claim 3. $\kappa(\overline{X'},\overline{K}_{X'/Y'}+(f')^*(v^*(L)+E_v))=\kappa(\overline{X'},u^*(\overline{K}_{X/Y})+f^*(L)+E_u)=\kappa(\overline{X},\overline{K}_{X/Y}+f^*(L))$, since $E_u$ is $u$-exceptional.
\end{proof}

We can now conclude the proof of Theorem \ref{+} (in generalised form, the given statement is obtained by choosing $L=\overline{K}_{Y}$).

We may, and shall assume that $\overline{X"}=\overline{X'}, D"=D'$. 

From Lemma \ref{loggenfinite}, Claim 3, we just need to check that: $$\kappa(\overline{X'}, \overline{K}_{X'/Y'}+(f')^*(v^*(L)+E_v))=\kappa(F)+\kappa(\overline{Y},\overline{K}_{Y}+L).$$

Since $f'=(p\circ \beta):\overline{X'}\to \overline{Y'}$, we have: $\kappa(\overline{K}_{X'/Y'}+( f')^*(v^*(L)+E_v))=\kappa(\overline{X_0}, \overline{K}_{X_0/Y'}+(p\circ \beta)^*(v^*(L)+E_v))$, where $(\overline{X_0},D_0):=F\times (\overline{Y'},D_{Y'})$, with projections $p:\overline{X_0}\to \overline{Y'}$ and $q: \overline{X_0}\to F$. 

Since $\overline{K}_{X_0/Y'}=q^*(K_F)$, we get: $\kappa(\overline{X_0}, \overline{K}_{X_0/Y'}+(p)^*(v^*(L)+E_v))=\kappa(\overline{X_0}, q^*(K_F)+(p)^*(v^*(L)+E_v))=\kappa(F)+\kappa(\overline{Y'},v^*(L)+E_v)=\kappa(F)+\kappa(\overline{Y'},v^*(L))=\kappa(F)+\kappa(\overline{Y},L)$, since $E_v$ is $v$-exceptional. Hence: $\kappa(\overline{X},\overline{K}_{X/Y}+f^*(L))=\kappa(F)+\kappa(\overline{Y},L)$, as claimed.

Notice that we proved in particular that $\kappa(\overline{X'}, \overline{K}_{X'/Y'}+(f')^*(E_v))=\kappa(\overline{X'}, \overline{K}_{X'/Y'})$, but quite indirectly, by going through $\overline{X'}$ and $\overline{X_0}=F\times \overline{Y'}$. This detour was not needed in the projective case.
\end{proof}

 \begin{corollary}\label{cor2} If $f:X\to Y$ is smooth, projective and birationally isotrivial, with extension $f:(\overline{X},D)\to (\overline{Y},D_Y)$ to good compactifications as above, then: 
 
 $\overline{\kappa}(X)\geq \kappa(\overline{X})\geq \kappa(\overline{X}, K_{\overline{X}}+D-f^*(D_Y))=\kappa(\overline{Y})+ \kappa(X_y)$.
  \end{corollary}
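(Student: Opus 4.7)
The plan is to peel off the three claims in turn: the two inequalities are immediate comparisons of divisors on $\overline{X}$, while the equality falls out of Theorem \ref{thmisot} applied with $L = K_{\overline{Y}}$.

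For the first inequality I would simply observe that $\overline{\kappa}(X) = \kappa(\overline{X}, K_{\overline{X}} + D)$ and $D \geq 0$, so by monotonicity of the Iitaka dimension under adding effective divisors, $\kappa(\overline{X}, K_{\overline{X}} + D) \geq \kappa(\overline{X}, K_{\overline{X}}) = \kappa(\overline{X})$. For the second inequality, the key point I would establish is that $f^*(D_Y) - D$ is effective. Since $f:(\overline{X}, D) \to (\overline{Y}, D_Y)$ is a map of pairs with $f^{-1}(D_Y) = D$ set-theoretically, the divisor $f^*(D_Y)$ is supported on $D$, and every component $D_i$ of $D$ appears in $f^*(D_Y)$ with some multiplicity $m_i \geq 1$. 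Hence $f^*(D_Y) - D = \sum_i (m_i - 1) D_i$ is effective, so $K_{\overline{X}} = (K_{\overline{X}} + D - f^*(D_Y)) + (f^*(D_Y) - D)$ dominates $K_{\overline{X}} + D - f^*(D_Y)$ by an effective divisor, yielding $\kappa(\overline{X}) \geq \kappa(\overline{X}, K_{\overline{X}} + D - f^*(D_Y))$.

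For the equality, I would rewrite the divisor as $K_{\overline{X}} + D - f^*(D_Y) = \bigl(K_{\overline{X}} + D - f^*(K_{\overline{Y}} + D_Y)\bigr) + f^*(K_{\overline{Y}}) = \overline{K}_{X/Y} + f^*(K_{\overline{Y}})$, directly from the definition $\overline{K}_{X/Y} = K_{\overline{X}} + D - f^*(K_{\overline{Y}} + D_Y)$. Applying Theorem \ref{thmisot} with the line bundle $L = K_{\overline{Y}}$ then gives $\kappa(\overline{X}, \overline{K}_{X/Y} + f^*(K_{\overline{Y}})) = \kappa(X_y) + \kappa(\overline{Y}, K_{\overline{Y}}) = \kappa(X_y) + \kappa(\overline{Y})$, closing the chain.

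There is no genuine obstacle: Theorem \ref{thmisot} has absorbed all the real geometric content, and the corollary is essentially a reshuffling of divisors on $\overline{X}$. The only routine verification is the effectivity of $f^*(D_Y) - D$, which is a direct consequence of the compactifications being compatible in the sense of good pairs with $f^{-1}(D_Y) = D$.
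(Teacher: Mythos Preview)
Your proof is correct and follows essentially the same route as the paper. The paper's one-line argument applies Theorem \ref{thmisot} with the appropriate line bundle on $\overline{Y}$ (your explicit choice $L=K_{\overline{Y}}$ makes the computation transparent) and invokes the same observation $D\leq f^*(D_Y)$, which you justify in a bit more detail; the first inequality $\overline{\kappa}(X)\geq\kappa(\overline{X})$ from $D\geq 0$ is left implicit in the paper but is exactly what you wrote.
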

  
  \begin{proof} Apply the preceding result with $L:=-D_Y$, so that $\overline{K_Y}=K_{\overline{Y}}+D_Y$, and observe that $D\leq f^*(D_Y)$ since $D=f^{-1}(D_Y)$.
  \end{proof}

\begin{remark}\label{r2} 1.If there exists a $\Bbb{Q}$-divisor $0\leq \Delta_Y\leq D_Y$ such that $f^*(\Delta_Y)\leq D$, and $\kappa(\overline{X},K_{\overline{X}}+D-f^*(\Delta_Y))=\kappa(\overline{X})$,  the same argument shows that $\kappa(\overline{X})=\kappa(\overline{Y}, K_{\overline{Y}}+(D_Y-\Delta_Y))+\kappa(X_y)$. 

Such a divisor $\Delta_Y$ exists if, for example, the fibres of $f:D\to D_Y$ are irreducible over the generic point of each component of $D_Y$. 

2. One also has the `easy addition' inequality in the other direction: $\kappa(\overline{X})\leq \overline{\kappa}(X)\leq \kappa(X_y)+dim(Y)$, which cannot be improved in general, even for birationally isotrivial fibrations.\end{remark}

 \subsection{Specialness.}

 Recall that, according to the definitions of \cite{Ca04} and \cite{Ca07}, the connected quasi-projective manifold  $Y=\overline{Y}\setminus D_Y$ is special if, for any $p>0$ and any saturated rank-one subsheaf $L\subset \Omega^p_{\overline{Y}}(Log(D_Y))$, one has: $\kappa(\overline{Y},L)<p$. This is independent of the good compactification $(\overline{Y},D_Y)$ of $Y$.

 Quasi-projective curves are either special or hyperbolic. The only special non-projective curves are $\Bbb P_1$ with $d=1,2$ points removed. Special manifolds generalise in higher dimensions rational and elliptic curves. 
 
 Special curves directly generalise in higher dimension as those such that, either $\overline{\kappa}(Y)=0$, or such that $h^0(\overline{Y}, Sym^m(\Omega^p_X(Log(D_Y)))\otimes A)=0, \forall m\geq m_A$, where $A$ is ample on $\overline{Y}$, and $m_A>0$ depends on $A$. The latter ones are the quasi-projective analogues of rationally connected manifolds (and coincide with them when $Y$ is projective, see \cite{Ca17} for details). 
 
 The class of special manifolds is, however, much larger than these two basic examples. In particular, specialness is not determined by $\overline{\kappa}$, unless $\overline{\kappa}\in \{ 0,dim(Y)\}$.
 
 Examples of special projective $Y$ with $dim(Y)=p,\kappa(Y)=k\leq (p-1)$ are the generic smooth divisors of bidegree $(d,p-k+2)$, with $d>k+1$ in $\Bbb P_k\times \Bbb P_{p-k+1}$. Projective, non-special examples with $\kappa(Y)\in \{-\infty,1,\dots,n\}$ are easily constructed as products $Y=Z\times F$, with $Z$ of general type, and $\kappa(F)=0$, or $-\infty$.

 The special quasi projective surfaces are exactly those with $\overline{\kappa}\leq 1$ and $\pi_1$ virtually abelian. Those with $\overline{\kappa}\neq 0,-\infty$ are the ones which fibre over a special curve with special generic orbifold fibres. No such simple description is known, or expected in dimension $3$ or more.
 
 \medskip
 
 Conditionally in the $C_{n,m}^{orb}$-conjecture (Conjecture \ref{cnmorb}, stated below), the special manifolds can be canonically decomposed as towers of fibrations with orbifold fibres having either $\kappa^+=-\infty$, or $\kappa=0$. See \cite{Ca07},\S11 for details.
 
 Special manifolds play an important r\^ole in birational geometry because of the core map, described below.
 
 \subsection{Birational isotriviality and specialness.}

 \begin{proposition} (\cite{AC}, Theorem 5.1) Let $f:X\to Y$ be a surjective and smooth projective morphism with connected fibres between two quasi-projective manifolds. Assume that the fibres have semi-ample canonical bundles, and that $Y$ is special. Then $f$ is birationally isotrivial.
 \end{proposition}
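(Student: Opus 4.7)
The plan is to argue by contradiction: assume $f$ is not birationally isotrivial, and from this extract a ``Bogomolov-type'' rank-one subsheaf of some $\Omega^p_{\overline{Y}}(\log D_Y)$ that directly violates the specialness of $Y$. First, using the good minimal model hypothesis, I would replace $f$ by its relative good minimal model $f':X'\to Y$ and then compose with the relative Iitaka fibration of $(X',K_{X'/Y}^{[N]})$ to obtain an auxiliary fibration $h:Z\to Y$ whose geometric fibres are canonically polarized with canonical singularities, namely the canonical models of the $X_y$. Writing $r$ for the dimension of the image of the induced moduli map $\mu:Y\to M$ into the coarse moduli space of such polarized varieties, a standard descent argument shows that birational isotriviality of $f$ is equivalent to $r=0$. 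So I would assume $r\geq 1$ and aim for a contradiction.

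The heart of the argument, and its hardest step, is to exhibit under the hypothesis $r\geq 1$ a saturated rank-one subsheaf
$$L\;\hookrightarrow\;\Omega^p_{\overline{Y}}(\log D_Y)\quad\text{with}\quad \kappa(\overline{Y},L)\geq p$$
for some $p$ with $1\leq p\leq r$. This is the content of the Viehweg--Zuo machinery, in the form refined by Popa--Schnell, Campana--P\u aun and Taji. The starting input is the semipositivity of the Hodge bundles $h_{*}\omega^{[m]}_{Z/Y}$, which is available thanks to the semi-ampleness of $K_{X'/Y}^{[N]}$ supplied by the good minimal model hypothesis (through the positivity theorems of Kawamata, Fujino--Fujisawa, Fujino). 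For $m\gg 0$ one then checks that $\det h_{*}\omega^{[m]}_{Z/Y}$ has Kodaira dimension at least $r$ on $\overline{Y}$. The Viehweg--Zuo negativity argument for the kernel of the iterated Kodaira--Spencer map embeds a twist of this determinant into $\mathrm{Sym}^{N}\Omega^1_{\overline{Y}}(\log D_Y)$, and the Campana--P\u aun refinement, via Bogomolov-type bounds on algebraically integrable foliations, upgrades the result to a saturated rank-one subsheaf of $\Omega^p_{\overline{Y}}(\log D_Y)$ itself with the required Kodaira dimension.

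Once this subsheaf is produced, the conclusion is immediate: the specialness of $Y$, by the very definition recalled in the excerpt, forbids any saturated rank-one $L\subset \Omega^p_{\overline{Y}}(\log D_Y)$ from satisfying $\kappa(\overline{Y},L)\geq p$. The contradiction forces $r=0$, so after a generically finite base change $Y_1\to Y$ the moduli map becomes constant, and $X\times_Y Y_1$ is birational over $Y_1$ to $F\times Y_1$ for $F$ a common birational model of the fibres of $f$.

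The main obstacle is clearly the Viehweg--Zuo--Campana--P\u aun construction in the middle paragraph; the good minimal model hypothesis enters there in a decisive way, since without it neither the relative Iitaka fibration on the total space nor the positivity of the direct images $h_{*}\omega^{[m]}_{Z/Y}$ is available in the form required by the subsequent Hodge-theoretic analysis, and the passage from $\mathrm{Sym}^{N}\Omega^1$ down to $\Omega^p$ would break down.
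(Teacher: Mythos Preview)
The paper does not prove this proposition at all; it simply quotes it as \cite{T21}, Theorem~1.1 (Taji), calling it ``the main input in the present note'', and uses it as a black box. So there is no proof in the paper to compare your attempt against.

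That said, your outline is a faithful high-level sketch of the strategy that actually lies behind \cite{T21}: pass via the good minimal model hypothesis to a family of canonical models with a moduli map of positive-dimensional image if $f$ is not birationally isotrivial; use the Viehweg--Zuo construction (in the form extended to mildly singular fibres) together with the positivity of the pushforwards $h_{*}\omega^{[m]}_{Z/Y}$ to produce a big line in some $\mathrm{Sym}^{N}\Omega^1_{\overline{Y}}(\log D_Y)$; and then invoke the Campana--P\u aun slope/foliation analysis \cite{CP19} to upgrade this to a saturated rank-one subsheaf of $\Omega^p_{\overline{Y}}(\log D_Y)$ with $\kappa\geq p$, contradicting specialness. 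You have correctly identified where the good minimal model hypothesis is consumed. The only caveat is that your middle paragraph compresses the genuinely hard work of \cite{T21}, \cite{VZ00}, \cite{PS17}, and \cite{CP19} into a few sentences; each of the transitions (from positivity of direct images to a Viehweg--Zuo sheaf, and from $\mathrm{Sym}^{N}\Omega^1$ to $\Omega^p$) is a substantial theorem, not a routine step.
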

 
 \begin{remark} In \cite{T21}, Theorem 1.1. gives the same conclusion, provided the fibres $X_y$ have good minimal models.\end{remark}

 The text \cite{AC} deals with the more general case when $f$ is given by a regular foliation with smooth but possibly some multiple fibres.The proof in the present submersive case is simpler, and can be directly adapted from the case of canonically polarised manifolds by replacing in \cite{VZ00}, Theorem 1.8, statement (i) by statement (iii), which constructs a big subsheaf $P\subset Sym^m(\Omega^1_{\overline{Y}}(Log(D_Y))$ when $Var(f)=dim(Y)$. Then \cite{V'}, Theorem 1.13, produces the map $\mu:Y\to M$, the  suitable moduli space of polarised manifolds with semi-ample canonical bundle. The arguments of \cite{JK} still work in this context, and construct $\mu^*(det(P))\subset \otimes^{m'} (\Omega^1_{\overline{Y}}(Log(D_Y))$, with $\kappa(\overline{Y},det(P))=Var(f)=dim(Image(\mu))$. Theorem 4.2 of \cite{AC}, based on \cite{CP19}, Corollary 8.7, then shows that $Y$ is not special if $Var(f)>0$.
 
  \begin{example}\label{exvz} An initial example is \cite{VZ01}, Theorem 0.1, which says among other things that if $f:X\to Y$ is smooth projective, with $K_{X_y}$ semi-ample, where $Y$ is a quasi-projective curve, then $f$ is birationally isotrivial if $Y$ is either $\Bbb P_1$ with at most $2$ points deleted, or an elliptic curve, that is: if $Y$ is special.  
 \end{example}

 The following conjecture is the obvious extension of former ones. A positive answer would imply a complete solution of Conjecture 3.4 of \cite{Pop14} by the arguments presented here.
 
 \begin{conjecture} \label{conj}
 
 Let $f:X\to Y$ be a smooth projective morphism with connected fibres between two quasi-projective manifolds. If $K_{X_y}$ is pseudo-effective for $y\in Y$ general, and if $Y$ is special, then $f$ is birationally isotrivial.
 
 \end{conjecture}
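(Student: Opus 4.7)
The natural plan is to follow the same broad strategy that works for Taji's proposition just cited, namely argue by contradiction: assume $f$ is not birationally isotrivial, produce a saturated rank-one subsheaf $L \subset \Omega^p_{\overline{Y}}(\log D_Y)$ for some $1\leq p\leq \dim Y$ with $\kappa(\overline{Y},L)\geq p$, and then apply the definition of specialness to get a contradiction. The question is how to build such an $L$ (a Viehweg--Zuo type subsheaf) without assuming a good minimal model on the fibres.

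One natural first attempt is to reduce to Taji's statement via the relative minimal model program. Since $K_{X_y}$ is pseudo-effective for generic $y$, the minimal model conjecture predicts that $f$ is birationally equivalent over $Y$ to some $f':X'\to Y$ whose general fibre is a minimal model; adding abundance on fibres would then give a good minimal model, and Taji's result would apply directly. This reduction is clean but rests on two open conjectures, so it does not yield an unconditional attack.

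A more direct approach is to re-run the Viehweg--Zuo construction in the pseudo-effective setting. By the positivity theorems of Cao--P\u{a}un, P\u{a}un--Takayama and Hacon--Popa--Schnell, pseudo-effectivity of $K_{X_y}$ already implies that the direct images $f_{*}(\omega^{\otimes m}_{X/Y}\otimes \mathcal{L})$ carry non-trivial singular Hermitian metrics with semi-positive curvature, and the curvature form ``sees'' the infinitesimal variation of the family. If $\mathrm{Var}(f)>0$, one would try to combine this positivity with a Kodaira--Spencer-type map in order to obtain a non-zero morphism from an ample line bundle into some $\mathrm{Sym}^{N}\Omega^{p}_{\overline{Y}}(\log D_Y)$; saturating this morphism produces the desired $L$. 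The final contradiction with specialness is then formal.

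The decisive obstacle, and the reason the conjecture is open, lies precisely in the construction of this Viehweg--Zuo subsheaf when the fibres have no (known) good minimal model. All current constructions, including the one used in \cite{T21}, exploit semi-ampleness of $K_{X'_y}$ to manufacture an ample line bundle on the moduli stack of canonically polarized varieties, whose pullback to $Y$ gives the starting positive object. Without semi-ampleness one loses both the canonical polarization and the moduli stack in the classical sense, and the ample input must be replaced by a genuinely transcendental, singular-metric input; turning such qualitative positive curvature into an algebraic saturated subsheaf of $\Omega^{p}_{\overline{Y}}(\log D_Y)$ of maximal Kodaira dimension is, to the best of our knowledge, the main missing step. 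Any proof of Conjecture \ref{conj} will have to address it, either by a new analytic argument in the spirit of P\u{a}un's currents, or by an unconditional partial MMP on the fibres sufficient to feed Taji's theorem.
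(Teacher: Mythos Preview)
The statement you are addressing is labelled \emph{Conjecture} in the paper, and the paper offers no proof of it whatsoever; it is stated as an open problem whose positive solution would, via the arguments of the paper, settle Popa's Conjecture~3.4. There is therefore no ``paper's own proof'' to compare your proposal against.

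Your proposal is not a proof either, and to your credit you recognise this explicitly: you sketch two possible lines of attack (reduction to Taji's theorem via MMP plus abundance on fibres, or a direct Viehweg--Zuo construction using only pseudo-effectivity), and then correctly identify the obstruction in each case. The first route is conditional on two open conjectures; the second lacks the key step of producing an algebraic Viehweg--Zuo subsheaf of maximal Kodaira dimension from purely transcendental positivity data. Your diagnosis of why the conjecture remains open is reasonable and in line with the current state of the literature, but it is a commentary on the difficulty of the problem, not a proof attempt with a gap to be fixed. If the assignment was to prove the statement, the honest answer is that no proof is known; if it was to explain the obstacles, you have done so adequately.
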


 \section{Geometry of orbifold pairs.}

  \subsection{Orbifold pairs, invariants.}
  
  We first briefly recall some definitions and facts on orbifold pairs, and refer to \cite{Ca07}, \cite{CP19}, \cite{Cl} for further details. These notions are applied here only in the two extreme cases when the orbifold divisor $\Delta$ is either $0$ or reduced. The general notion is however needed even then.

  $\bullet$ A smooth orbifold pair $(X,\Delta)$ is a smooth connected complex projective manifold $X$ together with an `orbifold divisor' $\Delta=\sum_Jc_j.D_j$, where $J$ is a finite set, the $D_j's$ are pairwise distinct prime divisors on $X$, the $c_j's$ are rational coefficients in $]0,1]$, and the support $\vert \Delta\vert$ of $\Delta$, which is the union of the $D_j's$, is of simple normal crossings. The coefficients $c_j$ can be written in a unique way as: $c_j=(1-\frac{1}{m_{\Delta}(D_j)})$, for rational numbers greater than $1$, and $+\infty$ if $c_j=1$. The $m_{\Delta}(D_j)'s$ are called the `$\Delta$-multiplicities' of the $D_j's$.
  
  These pairs interpolate between the projective (i.e: $\Delta=0$), and quasi-projective ($\Delta=\vert \Delta\vert$, i.e: $\Delta$ is reduced) cases. We indeed then identify $X$ with $(X,0)$ and $(X,\vert \Delta\vert)$ with $U:=X\setminus \vert \Delta\vert$. Any quasi-projective manifold $U$ is equivalently seen as a smooth orbifold pair $(X,\Delta)$ with reduced boudary $\Delta$, through any of its good compactifications.  
  
  Most of the time we shall give complete definitions only in these two cases ($\Delta=0$, or reduced).
  
$\bullet$ These orbifold pairs come equipped, not only with a canonical divisor $K_X+\Delta$, but also $\Bbb Q$-cotangent bundles and their tensor powers, orbifold morphisms, orbifold birational equivalence. When $\Delta=0$ these invariants are the usual ones, when $\Delta=\vert \Delta\vert$, they coincide with the (`integral' instead of `fractionary') classical log-cotangent bundles $\Omega^1_X(Log(\vert \Delta\vert)$. We say that $(X,\Delta)$ is of general type if $\kappa(X,K_X+\Delta)=dim(X)$.

$\bullet$ We define here the notion of orbifold morphism only in the two situations where either $\Delta=0$, or when $\Delta$ is reduced. In the first case an orbifold morphism is just a usual morphism. In the second case, an orbifold morphism $f:(X,\Delta)\to (Y,\Delta_Y)$ is a morphism $f:X\to Y$ such that $f^{-1}(\Delta_Y)\subset \Delta$, or equivalently, such that $f(X\setminus \Delta)\subset (Y\setminus \Delta_Y)$.
  
  Orbifold morphisms induce maps of $\mathbb Q$-cotangent bundles. This is clear for $\Delta=0$ or reduced.
  
  $\bullet$ If $f:(X',\Delta')\to (X,\Delta)$ is a birational map between $X'$ and $X$, with $\Delta$ and $\Delta'$ reduced, it is said to be an orbifold birational map if and only if $f^{-1}(\Delta)=\Delta'$. It thus induces a birational map between $U':=X'\setminus \Delta'$, and $U:=X\setminus \Delta$. Conversely, any such birational map extends as an orbifold birational equivalence between any of their good compactifications. 
  
 \subsection{Birational neat models.}
  
  If $f:X\to Y$ is a fibration between two projective manifolds, and $X$ is equipped with an orbifold divisor $\Delta$, we simply write:
   `$f:(X,\Delta)\to Y$ is a fibration'.
  
   We then say that $f$ is `$\Delta$-neat' if:
  
  1. The divisor $D_f\subset Y$ over which the fibres of $f$ are singular is of simple normal crossings.
  
  2. $f^{-1}(D_f)\cup \vert \Delta \vert$ is of simple normal crossings.
  
  3. There exists a birational map $u_0:X\to X_0$, with $X_0$ smooth, such that each $f$-exceptional divisor of $X$ is $u_0$-exceptional.
  
  If $f_0:(X_0, \Delta_0)\dasharrow Y_0$ is any rational fibration, one can always find a birational model of $f_0$ which is neat, by suitable modifications of $X_0$ and $Y_0$, first applying flattening to the fibres of $f$, and then desingularising the normalised main component of the fibre product. The orbifold divisor $\Delta$ on the resulting $X$ is defined by taking first the strict transform of $\Delta_0$ by $u_0$, and then adding the exceptional divisors of $u_0$ equipped with coefficients large enough to make $u_0:(X,\Delta)\to (X_0,\Delta_0)$ an orbifold morphism.
  
  When $\Delta$ is reduced, this amounts to equip any $u_0$-exceptional divisor $E\subset X$ with the coefficient $1$ if $u_0(E)\subset \Delta_0$, and with the coefficient $0$ otherwise.

 \subsection{Orbifold base and fibres of a fibration}  
  
   Let $f:(X,\Delta)\to Y$ be a $\Delta$-neat fibration. 
   
   We define the orbifold base $(Y,\Delta_{(f,\Delta)})$ of $(f,\Delta)$ as follows:
 
 For each prime divisor $E\in Y$, $f^*(E)=\sum_kt_k(E,F_k,f).F_k+R$, where $R$ is $f$-exceptional and the $F_k's$ are the pairwise distinct components of $f^{-1}(E)$ which are surjectively mapped onto $E$ by $f$. Define: $m_{f,\Delta}(E):=inf_k\{t_k. m_{\Delta}(F_k)\}$: this is the generic multiplicity of the fibre of $(f,\Delta)$ over $E$. 
 
 Let next $\Delta_{f,\Delta}:=\sum_{E\in Y}(1-\frac{1}{m_{f,\Delta}(E)}).E$: this sum is finite and supported by a simple normal crossing divisor, by the first $2$ neatness conditions above. 
 
 The orbifold pair $(Y,\Delta_{f,\Delta})$ is the `orbifold base' of $(f,\Delta)$.
 
 \medskip

{\bf Orbifold fibres.} Let $f:(X,\Delta)\to Y$ be a fibration between complex projective manifolds. For $y\in Y$ generic, such that the intersection of $X_y$ with each component of $\Delta$ is transversal, we define $\Delta_{X_y}$ as the intersection $X_y\cap \Delta$, the coefficient on each component being the same one as in $\Delta$.This is the generic (smooth) orbifold fibre of $(f,\Delta)$.

\medskip

  The following elementary property is used in step 5 of the proof of Theorem \ref{+}.
  
   \begin{lemma}\label{>}Let $f:X\to Y$ and $h:Y\to Z$ be two fibrations between projective manifolds. Assume $f$ to be neat, and $h:(Y,\Delta_f)\to Z$ to be $\Delta_f$-neat. Then $\Delta_{h\circ f}\geq \Delta_{h,\Delta_f}$ (i.e: the difference is effective).
   
   In particular, if $(Z,\Delta_{h,\Delta_f})$ is of general type, so is $(Z,\Delta_{h\circ f})$.
\end{lemma}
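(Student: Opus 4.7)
My plan is to reduce the claim to a coefficient-wise comparison. For each prime divisor $E\subset Z$, both $\Delta_{h\circ f}$ and $\Delta_{h,\Delta_f}$ have coefficient at $E$ of the form $1-\frac{1}{m}$, so the effectiveness of the difference at $E$ amounts to showing $m_{h\circ f}(E)\geq m_{h,\Delta_f}(E)$. Writing $h^*(E)=\sum_j s_jG_j+R_h$ and $f^*(G_j)=\sum_k t_{jk}F_{jk}+R_{f,j}$ as in the definition, one has $m_{h,\Delta_f}(E)=\min_{j,k}s_jt_{jk}$ and $(h\circ f)^*(E)=\sum_j s_jf^*(G_j)+f^*(R_h)$. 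Thus the task reduces to showing that every prime divisor $F\subset X$ surjecting onto $E$ via $h\circ f$ appears in $(h\circ f)^*(E)$ with multiplicity at least $\min_{j,k}s_jt_{jk}$.

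Next, I would split into two cases according to the dimension of $f(F)$. If $f(F)$ is a divisor, it must equal some $G_j$ (a component of $R_h$ cannot surject onto $E$ via $h$), so $F=F_{jk}$ for some $k$, and its multiplicity in $(h\circ f)^*(E)$ is exactly $s_jt_{jk}$, giving the bound at once. If instead $f(F)=W$ has codimension $\geq 2$ in $Y$, then $F$ is $f$-exceptional, and $h(W)=E$ forces $W\subset G_{j_0}$ for at least one $j_0$ (again $W$ cannot lie in $R_h$ since its components map into proper subvarieties of $E$). Since $F$ does not appear in $f^*(R_h)$ either, the multiplicity of $F$ in $(h\circ f)^*(E)$ equals $\sum_{j:W\subset G_j}s_j\cdot\mathrm{mult}_F\bigl(f^*(G_j)\bigr)$.

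The main step, and where I expect the work to be, is to establish $\mathrm{mult}_F\bigl(f^*(G_j)\bigr)\geq m_f(G_j)$ for every such $f$-exceptional $F$ with $W\subset G_j$; this is exactly what neatness condition (3) on $f$ is designed to deliver. Since $F$ is $u_0$-exceptional for some birational morphism $u_0\colon X\to X_0$ with $X_0$ smooth, a standard local-multiplicity computation under blow-ups of smooth centres (applied to a factorization of $u_0$) shows that the exceptional divisor produced over any centre contained in the support of $f^*(G_j)$ picks up a multiplicity equal to the sum of the $t_{jk}$ for the components through that centre, which is at least $m_f(G_j)$. Inserting this in the displayed sum yields $\mathrm{mult}_F\bigl((h\circ f)^*(E)\bigr)\geq s_{j_0}m_f(G_{j_0})\geq m_{h,\Delta_f}(E)$, so the two cases combine to give the desired inequality $m_{h\circ f}(E)\geq m_{h,\Delta_f}(E)$.

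For the "in particular" assertion, I would write $K_Z+\Delta_{h\circ f}=(K_Z+\Delta_{h,\Delta_f})+(\Delta_{h\circ f}-\Delta_{h,\Delta_f})$, where the second summand is effective by the first part. The bigness of $K_Z+\Delta_{h,\Delta_f}$, which is the general-type hypothesis on $(Z,\Delta_{h,\Delta_f})$, is preserved upon adding an effective $\mathbb{Q}$-divisor, so $(Z,\Delta_{h\circ f})$ is of general type as well.
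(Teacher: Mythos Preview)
Your reduction to the coefficientwise inequality $m_{h\circ f}(E)\geq m_{h,\Delta_f}(E)$ and your Case~1 (where $f(F)$ is a divisor, hence equal to some $G_j$) are exactly the paper's argument: the paper writes $h^*(E)=\sum_k t_kF_k+R_h$, notes that $f^*(R_h)$ is $(h\circ f)$-exceptional, and for any $G\subset X$ with $f(G)=F_k$ observes that its multiplicity in $(h\circ f)^*(E)$ is at least $t_k\cdot m_{\Delta_f}(F_k)\geq m_{h,\Delta_f}(E)$. That is the whole of the paper's proof; your ``in particular'' clause (bigness is preserved under adding an effective $\mathbb{Q}$-divisor) is also the intended reading.

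Your Case~2 (an $f$-exceptional $F$ that nevertheless dominates $E$ under $h\circ f$) is not addressed in the paper's proof at all; you are attempting to be more careful than the paper here. However, your argument for this case has a genuine gap. Neatness condition~(3) only asserts that $F$ is $u_0$-exceptional for \emph{some} birational $u_0:X\to X_0$ with $X_0$ smooth; it does not say that $f$ factors as $f_0\circ u_0$. Your ``standard local-multiplicity computation under blow-ups'' implicitly presupposes such a factorisation, since you want to regard $f^*(G_j)$ as the $u_0$-pullback of a divisor on $X_0$ whose components already carry the multiplicities $t_{jk}$, and then argue that an exceptional divisor over a centre in its support accumulates those multiplicities. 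Without the factorisation there is no divisor on $X_0$ to work with, and even granting it, the assertion that one picks up ``the sum of the $t_{jk}$ for the components through that centre'' is not correct in general: the centre need not lie on every such component, and what is acquired depends on the particular sequence of blow-ups, not just on the final map. So while you have correctly identified a case the paper glosses over, your proposed treatment of it does not go through as written.
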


\begin{proof} Let $E\subset Z$ be a prime divisor, and $h^*(E)=\sum_kt_k.F_k+R_h$, with $R_h$ an $h$-exceptional divisor, and $h(F_k)=E,\forall k$. We thus have: $(h\circ f)^*(E)=f^*(\sum_kt_k.f^*(F_k))+f^*(R_h)$. The last term is $(h\circ f)$-exceptional. If $G\subset X$ is an irreducible divisor such that $f(G)=F_k$ for some $k$ appearing in the first term, its multiplicity in $\Delta_{h\circ f}$ is at least $t_k. m_{\Delta_f}(F_k)$, since $m_{\Delta_f}(F_k):=inf_{G}\{s_G\}$, where $f^*(F_k)=\sum_{r} s_r.G_r+R_{f,k}$, where $R_{f,k}$ is $f$-exceptional, and $r$ is the set of prime divisors $G_r\subset X$ which are mapped onto $F_k$ by $f$. Hence the claim.
\end{proof}

A slightly more general version is given in Lemma \ref{>'}.

  \subsection{Kodaira dimension of the orbifold base.}

  \begin{theorem}(\cite{Ca07}, 5.3) Let $f:(X,\Delta)\dasharrow Y$ be a rational fibration, with $Y$ smooth of dimension $p>0$. 
  
  Let $L_{f,\Delta}\subset \Omega^p(X,\Delta)$ be the saturation of $f^*(K_{Y})$ in $\Omega^p(X,\Delta)$. Let $f:(X',\Delta')\to Y'$ be any $\Delta'$-neat birational model of $f$. 
  
  Then: $\kappa(X, L_{f,\Delta})=\kappa(Y',K_{Y'}+\Delta_{f',\Delta'})=\kappa(X',L_{f',\Delta'})$. \end{theorem}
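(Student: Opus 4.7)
I would factor the statement into two assertions: (A) $\kappa(X, L_{f, \Delta}) = \kappa(X', L_{f', \Delta'})$ (birational invariance of the saturated subsheaf) and (B) $\kappa(X', L_{f', \Delta'}) = \kappa(Y', K_{Y'} + \Delta_{f', \Delta'})$ (computation on the neat model). For (A), I would adapt the argument of Lemma \ref{loggenfinite}: passing to a common birational refinement, the pullback of $L_{f, \Delta}$ embeds into $L_{f', \Delta'}$ inside $\Omega^p(X', \Delta')$, with the difference being effective and supported on the birational exceptional locus, hence contributing nothing to Kodaira dimension by Hartog extension.

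The substance of the argument is in (B). I would perform a local computation near a generic point of each component $F_k \subset X'$ mapping onto a prime divisor $E \subset Y'$. In local coordinates satisfying $E = \{y_1 = 0\}$, $F_k = \{z_1 = 0\}$, $(f')^* y_1 = u \cdot z_1^{t_k}$ for a unit $u$, and $(f')^* y_i = z_i$ for $2 \leq i \leq p$, and writing $m' := m_{\Delta'}(F_k)$, the fractional form $\omega := \frac{1}{m' z_1} dz_1 \wedge \ldots \wedge dz_p$ (for $m' > 1$) is a local basis element of $\Omega^p(X', \Delta')$. The identity
\[
(f')^*(dy_1 \wedge \ldots \wedge dy_p) = t_k u z_1^{t_k - 1} dz_1 \wedge \ldots \wedge dz_p = (t_k m' u z_1^{t_k}) \cdot \omega
\]
then shows that $L_{f', \Delta'}$ is locally generated by $\omega$, and the cokernel of $(f')^* K_{Y'} \hookrightarrow L_{f', \Delta'}$ has length $t_k$ along $F_k$ (or length $t_k - 1$ if $F_k \not\subset |\Delta'|$, in which case the saturation uses the integral form $dz_1 \wedge \ldots \wedge dz_p$ of $\Omega^p_{X'}$).

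Aggregating over all $F_k$, the $\mathbb{Q}$-Cartier class of $L_{f', \Delta'}$ differs from $(f')^*(K_{Y'} + \Delta_{f', \Delta'})$ by an effective divisor supported partly on $f'$-exceptional divisors and partly on the non-minimising components of $(f')^{-1}(E)$ (those with $t_j m_{\Delta'}(F_j) > m_{f', \Delta'}(E) := \inf_i t_i m_{\Delta'}(F_i)$). To convert this into an equality of Kodaira dimensions, I would establish a bijection between global sections of $L_{f', \Delta'}^{[N]}$ and of $\mathcal{O}_{Y'}(N(K_{Y'} + \Delta_{f', \Delta'}))$ for $N$ sufficiently divisible, using that the binding local regularity constraint at a generic point of $E \subset Y'$ is imposed precisely by the component $F_k$ realising the infimum $m_{f', \Delta'}(E)$.

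The main obstacle lies in this last step: one must verify that the non-minimising and exceptional contributions, while effective, do not produce new global sections. The key insight is that the \emph{infimum} in $m_{f', \Delta'}(E) = \inf_i t_i m_{\Delta'}(F_i)$ (rather than a sum or product) arises because sections of $L_{f', \Delta'}^{[N]}$ must be regular simultaneously along every $F_k$ in the preimage of $E$, and the strongest constraint is imposed by the minimising $F_k$; this is exactly what descends on $Y'$ to the coefficient $(1 - 1/m_{f', \Delta'}(E))$ of $E$ in $\Delta_{f', \Delta'}$.
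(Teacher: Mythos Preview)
The paper does not supply a proof of this theorem: it is quoted as a result from \cite{Ca07}, Theorem 5.3, and the surrounding text only clarifies what $\Omega^p(X,\Delta)$ means in the two extreme cases $\Delta=0$ and $\Delta$ reduced. There is therefore nothing in the present paper to compare your proposal against.

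That said, your outline is the standard route to this statement and is essentially the argument of \cite{Ca07}. A few points deserve tightening. First, your local generator $\omega=\frac{1}{m'z_1}\,dz_1\wedge\dots\wedge dz_p$ is only a heuristic: for genuinely fractional $m'$ the sheaf $\Omega^p(X',\Delta')$ is not literally generated by such an expression, and the computation must be carried out either after a Kawamata cover or at the level of $\mathbb{Q}$-linear equivalence classes and sufficiently divisible symmetric powers. Second, in part (B) you correctly identify that the non-minimising components and the $f'$-exceptional divisors contribute effective terms, but to conclude that these do not create extra sections you need the third neatness condition (every $f'$-exceptional divisor is $u_0$-exceptional for some birational $u_0:X'\to X_0$), which is exactly what lets you apply Hartogs on $X_0$ to kill the exceptional contribution; you invoke Hartogs only in part (A), not here. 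Third, the equality of section spaces you assert at the end is really a two-sided bound: one inclusion is the effectivity you computed, the other is that a section of $L_{f',\Delta'}^{[N]}$ restricted to a generic fibre of $f'$ is constant (since $K_{Y'}$ pulls back trivially there), hence descends to $Y'$, and the pole order along each $E$ is controlled by the minimising $F_k$ as you say. With these three points made explicit, your argument would be complete.
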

  
   Instead of defining what is meant here in general by the saturation in $\Omega^p(X,\Delta)$, let us just recall that in the two extreme cases considered here ($\Delta=0$, and $\Delta$ reduced), $\Omega^p(X,\Delta)$ coincides respectively with the classical $\Omega^p_X$ and with $\Omega^p_X(Log(\Delta))$.

  \begin{corollary} $(X,\Delta)$ is special if and only if, for any  of any rational fibration $f:(X,\Delta)\dasharrow Y$, and any of its $\Delta'$-neat birational models $f':(X',\Delta')\to Y'$, the orbifold base $(Y',\Delta_{f',\Delta'})$ is {\bf not} of general type.  \end{corollary}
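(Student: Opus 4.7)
The plan is to reduce the statement to the theorem immediately above, which identifies $\kappa(X,L_{f,\Delta})$ with $\kappa(Y',K_{Y'}+\Delta_{f',\Delta'})$ for any $\Delta'$-neat model $f'$, and to supplement it with an orbifold Bogomolov--Castelnuovo--de Franchis existence result, so that saturated rank-one subsheaves of $\Omega^p(X,\Delta)$ of maximal Kodaira dimension can be converted back into fibrations of dimension $p$.

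For the \emph{only if} direction, fix any rational fibration $f:(X,\Delta)\dasharrow Y$ together with a $\Delta'$-neat birational model $f':(X',\Delta')\to Y'$, and set $p:=\dim Y'>0$. Then $L_{f',\Delta'}\subset \Omega^p(X',\Delta')$ is, by construction, a saturated rank-one subsheaf. Since specialness is an orbifold birational invariant of the pair, and since passing from $(X,\Delta)$ to a neat model $(X',\Delta')$ is an orbifold birational modification, $(X',\Delta')$ is also special. By definition of specialness this forces $\kappa(X',L_{f',\Delta'})<p$. The preceding theorem then gives $\kappa(Y',K_{Y'}+\Delta_{f',\Delta'})=\kappa(X',L_{f',\Delta'})<p=\dim Y'$, so $(Y',\Delta_{f',\Delta'})$ is not of general type.

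For the \emph{if} direction I argue by contrapositive. Suppose $(X,\Delta)$ is not special: there exist $p>0$ and a saturated rank-one $L\subset \Omega^p(X,\Delta)$ with $\kappa(X,L)\geq p$, necessarily with equality by the orbifold Bogomolov--Sommese inequality. The orbifold Bogomolov--Castelnuovo--de Franchis theorem from \cite{Ca07} then produces a rational fibration $f:(X,\Delta)\dasharrow Y$ with $\dim Y=p$ such that $L$ factors through the saturation $L_{f,\Delta}$ of $f^*K_Y$ in $\Omega^p(X,\Delta)$. Consequently $\kappa(X,L_{f,\Delta})\geq \kappa(X,L)=p$, and for any $\Delta'$-neat model $f':(X',\Delta')\to Y'$ the preceding theorem yields $\kappa(Y',K_{Y'}+\Delta_{f',\Delta'})=\kappa(X',L_{f',\Delta'})=p=\dim Y'$, so the orbifold base is of general type, as required.

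The routine part of the argument is entirely bookkeeping around the neatness hypothesis and the definition of $L_{f,\Delta}$. The main obstacle, in the sense of the genuinely nontrivial inputs the proof rests on, is the pair of orbifold enhancements of classical results: the Bogomolov--Sommese bound $\kappa(X,L)\leq p$ for saturated rank-one $L\subset \Omega^p(X,\Delta)$, and the Castelnuovo--de Franchis-type construction of a fibration of dimension $p$ out of any such $L$ realising this bound. Both are classical for $\Delta=0$, and their extension to the $\mathbb Q$-cotangent formalism of orbifold pairs is precisely what makes the theory of \cite{Ca07} work; once these are taken for granted, the corollary follows by a direct application of the theorem just above.
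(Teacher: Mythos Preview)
Your argument is correct and matches the paper's intended approach: the corollary is stated without proof, as an immediate consequence of the preceding theorem together with the orbifold Bogomolov--Castelnuovo--de Franchis result from \cite{Ca07}, and that is precisely the combination you invoke. One minor simplification: for the ``only if'' direction you need not pass to $(X',\Delta')$ and appeal to birational invariance of specialness, since the theorem already gives $\kappa(X,L_{f,\Delta})=\kappa(Y',K_{Y'}+\Delta_{f',\Delta'})$ directly on $(X,\Delta)$.
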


 \subsection{Orbifold version of Viehweg's additivity.}

We have next the following orbifold extension of Viehweg's additivity theorem:

\begin{theorem}\label{Vieh} (\cite{Ca07}, 7.3, \cite{Ca04},4.13) Let $(X,\Delta)$ be a smooth orbifold pair, and $f:X\to Y$ be a $\Delta$-neat fibration. Assume that its orbifold base $(Y,\Delta_{f,\Delta})$ is of general type. Then $\kappa(X,\Delta)=\kappa(X_y,\Delta_{X_y})+dim(Y)$.
\end{theorem}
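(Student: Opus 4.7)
The plan is to reduce to the classical Viehweg-Kawamata additivity theorem $C_{n,m}$ (and its logarithmic variant when $\Delta$ is reduced) for a base of general type, via a Kawamata-type cover of $Y$ tailored to absorb the multiplicities of $\Delta_{f,\Delta}$. The easy-addition inequality $\kappa(X,K_X+\Delta)\le \kappa(X_y,K_{X_y}+\Delta_{X_y})+\dim(Y)$ holds essentially for free in the two cases of interest ($\Delta=0$ or $\Delta$ reduced), so the content of the theorem is the reverse inequality.

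By hypothesis $K_Y+\Delta_{f,\Delta}$ is big. A standard Kawamata cover construction produces a finite surjective morphism $v:Y'\to Y$, with $Y'$ smooth projective, ramified above each component $E\subset\operatorname{Supp}(\Delta_{f,\Delta})$ to the exact order $m_{f,\Delta}(E)$. Riemann-Hurwitz then gives $K_{Y'}=v^*(K_Y+\Delta_{f,\Delta})$, so $Y'$ is of general type in the classical sense. Flatten and resolve the main component of $X\times_Y Y'$ to obtain a smooth projective $\tilde X$ with a generically finite $u:\tilde X\to X$ and a neat fibration $\tilde f:\tilde X\to Y'$; equip $\tilde X$ with the pulled-back orbifold divisor $\tilde\Delta$ (strict transform of $\Delta$ plus exceptional components weighted so that $u$ becomes an orbifold morphism).

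The engine of the proof is the local identity $K_{\tilde X}+\tilde\Delta = u^*(K_X+\Delta)+E_u$, with $E_u$ an effective $u$-exceptional divisor. This is the orbifold analogue of Lemma~\ref{loggenfinite}.1 and depends crucially on the definition $m_{f,\Delta}(E)=\inf_k\{t_k\cdot m_\Delta(F_k)\}$: the ramification of $v$ has been tuned component by component so that, after base change, the fractional orbifold contribution of $\Delta$ along each vertical divisor mapping to $E$ is exactly cancelled. Granting this identity, $\kappa(\tilde X,K_{\tilde X}+\tilde\Delta)=\kappa(X,K_X+\Delta)$ by the usual $u$-exceptional discussion. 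Applying classical Viehweg-Kawamata additivity to $\tilde f:\tilde X\to Y'$ (its log version if $\tilde\Delta$ carries a reduced horizontal piece) then yields $\kappa(\tilde X,K_{\tilde X}+\tilde\Delta)\ge \kappa(X_y,K_{X_y}+\Delta_{X_y})+\dim(Y')$, where the generic fibre of $\tilde f$ is identified with $(X_y,\Delta_{X_y})$ because $v$ is \'etale over the generic point of $Y$; combined with $\dim(Y')=\dim(Y)$ this closes the argument. The main obstacle is the local verification of the cancellation identity at vertical components of $\Delta$ lying over higher-codimension strata of $\operatorname{Supp}(\Delta_{f,\Delta})$, which requires a careful monomial computation with fractional coefficients; a secondary subtlety is producing a Kawamata cover with precisely the prescribed ramification pattern, handled by the standard Bloch-Gieseker trick applied to a sufficiently divisible element of $|N(K_Y+\Delta_{f,\Delta})|$.
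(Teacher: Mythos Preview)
The paper does not contain a proof of this theorem: it is quoted from \cite{Ca04}, Theorem~4.13 and \cite{Ca07}, Theorem~7.3, and used as a black box. There is therefore nothing in the present paper to compare your argument against.

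On the substance of your sketch: the Kawamata-cover reduction is in the right spirit, but as written it only handles the two extreme cases $\Delta=0$ and $\Delta$ reduced (which are, to be fair, the only cases the paper needs). For a general orbifold divisor $\Delta$ the multiplicities $m_{f,\Delta}(E)=\inf_k\{t_k\,m_\Delta(F_k)\}$ are merely rational, so a finite cover cannot ramify ``to the exact order $m_{f,\Delta}(E)$''; and even after clearing denominators, only the component $F_k$ realising the infimum gets its fractional contribution cancelled, while the others leave a residual non-reduced vertical piece in $\tilde\Delta$. Your final step then invokes ``classical Viehweg--Kawamata additivity (its log version if $\tilde\Delta$ carries a reduced horizontal piece)'' on $(\tilde X,\tilde\Delta)\to Y'$, but $\tilde\Delta$ is in general still a non-reduced orbifold divisor, so you are appealing to the very orbifold statement you are trying to prove. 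The actual proofs in \cite{Ca04},\cite{Ca07} avoid this circularity by working directly with weak positivity of the sheaves $f_*\mathcal O_X(m(K_{X/Y}+\Delta))$ in the orbifold setting, rather than by reducing to the classical case via a single cover.
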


When $\Delta=0=\Delta_{f}$, this is Viehweg's additivity theorem. When $\Delta=\vert\Delta\vert$, it strengthens \cite{kaw'},Theorem 30, and \cite{Mae}, Corollary 2.

\medskip

Viehweg's orbifold additivity solves in this particular case the following orbifold version $C_{n,m}^{orb}$ of the $C_{n,m}$ conjecture.

\begin{conjecture}\label{cnmorb}(\cite{Ca07}, Conjecture 7.1) Let $f:(X,\Delta)\to Y$ be a $\Delta$-neat fibration between projective manifolds.

Then: $\kappa(X,K_X+\Delta)\geq \kappa(X_y,K_{X_y}+\Delta_y)+\kappa(Y,K_Y+\Delta_{f,\Delta})$.
\end{conjecture}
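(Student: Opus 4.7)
My plan is to reduce, via an Iitaka fibration on the orbifold base, to the single residual case in which $\kappa(Y,K_Y+\Delta_{f,\Delta})=0$; all other cases collapse into the orbifold Viehweg additivity of Theorem \ref{Vieh}.

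If $(Y,\Delta_{f,\Delta})$ is of general type, Theorem \ref{Vieh} already gives the sharper equality $\kappa(X,K_X+\Delta)=\kappa(X_y,K_{X_y}+\Delta_y)+\dim Y$, which implies the conjectured inequality. For the intermediate range, set $k:=\kappa(Y,K_Y+\Delta_{f,\Delta})$ with $1\leq k\leq \dim Y-1$. After a $\Delta$-neat modification, take $J:Y\to Z$ to be the Iitaka fibration of $K_Y+\Delta_{f,\Delta}$, so that $\dim Z=k$ and the orbifold base $(Z,\Delta_{J,\Delta_{f,\Delta}})$ is of general type by construction. Form $g:=J\circ f:(X,\Delta)\to Z$, made $\Delta$-neat. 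Lemma \ref{>} then yields $\Delta_{g,\Delta}\geq \Delta_{J,\Delta_{f,\Delta}}$, so $(Z,\Delta_{g,\Delta})$ is of general type as well. Applying Theorem \ref{Vieh} to $g$:
$$\kappa(X,K_X+\Delta)=\kappa(X_z,K_{X_z}+\Delta_{X_z})+k.$$
The general fibre $(X_z,\Delta_{X_z})$ is fibred by the restriction $f_z:(X_z,\Delta_{X_z})\to Y_z$ over the Iitaka fibre $Y_z$, whose orbifold base $(Y_z,\Delta_{f_z,\Delta_{X_z}})$ has Kodaira dimension zero by definition of the Iitaka fibration. Granting the conjecture in this residual case applied to $f_z$ yields $\kappa(X_z,K_{X_z}+\Delta_{X_z})\geq\kappa(X_y,K_{X_y}+\Delta_y)$, and substitution gives the desired inequality.

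The main obstacle is therefore the residual case $\kappa(Y,K_Y+\Delta_{f,\Delta})=0$, which is essentially equivalent to the full conjecture. In the classical setting ($\Delta=0$), Viehweg's weak positivity of $f_*\omega_{X/Y}^{\otimes N}$ combined with the structure of bases of Kodaira dimension zero handles this case. An orbifold version would require an analogous weak positivity statement for $f_*\mathcal{O}_X(N(K_{X/Y}+\Delta-f^*\Delta_{f,\Delta}))$, valid in the $\Delta$-neat setting. When the fibres admit good minimal models, one expects that the combination of the core map on $Y$, Taji's birational isotriviality theorem, and the isotrivial additivity of Theorem \ref{thmisot}, exactly as in the proof of Theorem \ref{+}, disposes of this residual case and thereby yields the conjecture under the good-minimal-model hypothesis.
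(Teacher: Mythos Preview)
The statement you are attempting to prove is labelled \emph{Conjecture} in the paper, and the paper does not claim or supply a proof; it is quoted from \cite{Ca07} as the open orbifold $C_{n,m}$ conjecture. So there is no ``paper's own proof'' to compare against, and any purported proof would settle a well-known open problem.

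Your reduction via the Iitaka fibration of $(Y,K_Y+\Delta_{f,\Delta})$ is the standard one, and you correctly observe that it collapses everything onto the residual case $\kappa(Y,K_Y+\Delta_{f,\Delta})=0$. But, as you yourself say, this residual case is ``essentially equivalent to the full conjecture'': already for $\Delta=0$ the classical $C_{n,m}$ over a base of Kodaira dimension zero is open in general. So nothing has actually been proved.

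The final paragraph, where you suggest that the core map, Taji's theorem, and Theorem~\ref{thmisot} dispose of the residual case under a good-minimal-model hypothesis, does not work as stated. First, $\kappa(Y,K_Y+\Delta_{f,\Delta})=0$ does \emph{not} imply that $Y$ (or the orbifold $(Y,\Delta_{f,\Delta})$) is special, so Taji's isotriviality criterion is not available on the nose; the core map of $Y$ may still have a positive-dimensional base. Second, both Taji's theorem and Theorem~\ref{thmisot} are applied in the paper to a \emph{smooth} fibration $f$, which is the standing hypothesis of Theorem~\ref{+}; Conjecture~\ref{cnmorb} assumes only that $f$ is $\Delta$-neat, and the key equality $K_{X'/Y'}=u^*(K_{X/Y})$ used in the isotrivial computation fails without smoothness. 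Third, the conjecture involves a nontrivial orbifold divisor $\Delta$ on $X$, whereas the machinery of \S\ref{birisot} and \cite{T21} is set up for fibres $X_y$ with trivial boundary. In short, the residual case remains genuinely open, and your proposal is a strategy outline rather than a proof.
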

 
 \subsection{The core map.}

 The notion of special orbifold pair permits to functorially\footnote{Into the category of dominant rational maps.}decompose by one single fibration any smooth projective orbifold $(Y,\Delta_Y)$ into its `special' and `general type' parts. We formulate here this statement only when $\Delta$ is reduced (or zero).

 \begin{theorem}(\cite{Ca07}, 10.1) Let $Y$ be a quasi-projective manifold. There is then a unique almost holomorphic\footnote{This means that its generic fibre does not meet its indeterminacy locus.} fibration $c_Y:Y\dasharrow C_Y$ such that its general fibre is special, and the orbifold base $(C',\Delta_{c'})$ of any of its compactified neat models $(\overline{Y},\Delta_Y)$ is of general type. 
 
  More precisely: one can choose a good compactification $(\overline{Y},D_Y)$ of a modification of $Y$ such that the core map is represented by a $D_Y$-neat fibration $c_Y:(\overline{Y},D_Y)\to (\overline{C},D_C)$. 
 \end{theorem}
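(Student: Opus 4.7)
The plan is to construct $c_Y$ as the maximum of a suitable poset of fibrations and then derive the general-type property of its base by contradiction.

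Fix a good compactification $(\overline{Y},D_Y)$ and let $\mathcal{F}$ be the set of $D_Y$-neat almost holomorphic fibrations $f:(\overline{Y},D_Y)\dashrightarrow B_f$, modulo birational equivalence of fibrations, whose generic orbifold fibre is special. The set $\mathcal{F}$ is nonempty (it contains the constant map to a point) and $\dim B_f\leq \dim Y$. The first step is to establish directedness: given $f_1,f_2\in\mathcal{F}$, the \emph{join} $f$ --- defined as the Stein factorisation of $(f_1,f_2):Y\dashrightarrow B_1\times B_2$ --- should again have special generic orbifold fibre. This reduces, by restricting $f_2$ to a generic $f_1$-fibre $F_{f_1}$, to the heritage principle that the generic orbifold fibre of a rational fibration from a special orbifold pair is itself special; this is a direct consequence of the Bogomolov-sheaf characterisation of specialness from \cite{Ca07}, \S3. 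A dimension maximum over $\mathcal{F}$ then produces the unique maximal element $c_Y:Y\dashrightarrow C_Y$.

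Next I would verify that the orbifold base $(\overline{C},\Delta_{c_Y})$ of a neat model of $c_Y$ is of general type. Suppose not. By the specialness characterisation recalled in the previous subsection, there exists a rational fibration $g:\overline{C}\dashrightarrow C'$ of strictly smaller target whose neat model has orbifold base $(C',\Delta_{g,\Delta_{c_Y}})$ of general type. Take a neat compatible model of $h:=g\circ c_Y$. By Lemma \ref{>}, the orbifold base of $h$ dominates $(C',\Delta_{g,\Delta_{c_Y}})$ and is therefore still of general type. On the other hand, the generic fibre of $h$ fibres over the generic fibre of $g$ with generic $c_Y$-fibres as fibres; combining the specialness of the generic $g$-fibre (forced by the fact that $g$ itself could not otherwise be chosen of minimal target within non-special-quotients of $\overline{C}$) with the specialness of generic $c_Y$-fibres, the same heritage principle forces this generic $h$-fibre to be special. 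Thus $h\in\mathcal{F}$ strictly refines $c_Y$, contradicting maximality.

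Uniqueness of $c_Y$ is immediate from directedness, and the extraction of a $D_Y$-neat representative follows from the general construction of birational neat models recalled in the preceding subsection. The main obstacle --- and the deepest input --- is the directedness of $\mathcal{F}$, equivalently the hereditary behaviour of specialness under passage to an orbifold fibre. This step is precisely where the orbifold framework with fractional multiplicities is indispensable: ordinary log pairs do not support the Bogomolov-sheaf argument in the generality needed, since multiplicities arising from singular fibres of the auxiliary fibrations over $B_1$ and $B_2$ must be tracked in the cotangent sheaves that detect specialness.
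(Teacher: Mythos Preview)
The paper does not prove this theorem; it is quoted from \cite{Ca07} and used as a black box. Comparing your sketch to the construction there, the direction of your optimisation is reversed, and this is fatal. The identity map on $(\overline{Y},D_Y)$ has points as fibres, and points are special; hence $\mathrm{id}\in\mathcal{F}$, and the maximum of $\dim B_f$ over $\mathcal{F}$ equals $\dim Y$, realised by the identity. Your join only increases base dimension, so directedness drives the construction towards $\mathrm{id}$, not towards $c_Y$. (Separately, the constant map lies in $\mathcal{F}$ only when $Y$ itself is special, so your stated reason for non-emptiness is also wrong.)

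Campana's construction runs the other way: one maximises $\dim B_f$ over the set of fibrations whose \emph{orbifold base} is of general type (this set always contains the constant map, a point being trivially of general type), and then proves that the general fibre of any maximiser is special. The hard step is this last implication, obtained by spreading out a Bogomolov sheaf on a hypothetically non-special fibre and showing that the resulting refined fibration still has general-type orbifold base---an orbifold-additivity argument, not the specialness-heritage principle you invoke. Your contradiction step inherits further problems even on its own terms: ``$(\overline{C},\Delta_{c_Y})$ not of general type'' does not produce your $g$ (that requires ``not special'', a different hypothesis), and the assertion that a fibration with special orbifold base and special fibres has special total space is of essentially the same depth as the theorem you are proving, not a lemma one may cite in passing.
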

 
 The two extreme cases are when $Y$ is either special (and $C$ a point), or of general type (and $C=Y$).

 \begin{remark} Assume that $f:X\to Y$ is smooth, projective, with $X,Y$ quasi-projective, and that $K_{X_y}$ is semi-ample. Then $dim(Y)\geq Var(f)\geq 0$, where $Var(f)$ is the generic rank over $Y$ of $R^1f_*(T_{X/Y})$. It follows from \cite{AC}, Theorem 5.1, that $Var(f)\leq dim(C_Y)$.
 \end{remark}

 \section{The Quasi-Projective Case.}\label{open}

We now prove Theorem \ref{+} in the quasi-projective case. The steps are the same as in the projective case, we just need to give the additional details required about the compactifications. 

We thus have $f:X\to Y$ smooth projective as in the statement of theorem \ref{+}. We extend $f$ to good compactifications $f:(\overline{X},D)\to (\overline{Y}, D_Y)$ and may assume that the core map $c_Y:(\overline{Y},D_Y)\to (\overline{C},D_C)$ is regular and $D_Y$-neat.

Since the fibres of $f:(\overline{X},D)\to (\overline{Y},D_Y)$ are smooth, projective, and the boundary divisors $D,D_Y$ are reduced, the orbifold base of $f:(\overline{X},D)\to (\overline{Y},D_Y)$ is just $(\overline{Y},D_Y)$.

We are thus in position to apply the Lemma \ref{>'} below, general version of Lemma \ref{>}, to $(\overline{X},D)$, $c_Y$ and $\overline{C}$ in place of $(X,\Delta)$, $g$ and $Z$. This Lemma shows that: $\Delta_{c_Y\circ f,D}\geq \Delta_{c_Y,D_Y}$, and so the left-hand term is of general type since $\Delta_{c_Y,D_Y}$ is. From the orbifold version of Viehweg's additivity (i.e: Theorem \ref{Vieh}), we thus get:

$\overline{\kappa}(X)=\overline{\kappa}(X_c)+dim(C)$, and:

$\overline{\kappa}(Y)=\overline{\kappa}(Y_c)+dim(C)$, and thus:

$\overline{\kappa}(X)-\overline{\kappa}(Y)=\overline{\kappa}(X_c)-\overline{\kappa}(Y_c)=\kappa(X_y)$, by the proof of Theorem \ref{+} in the birationally isotrivial case given in \S.\ref{birisot}.

   \begin{lemma}\label{>'}Let $f:(X,\Delta)\to Y$ and $h:Y\to Z$ be two fibrations between projective manifolds. Assume $f$ to be $\Delta$-neat, and $h:(Y,\Delta_{f,\Delta})\to Z$ to be $\Delta_{f,\Delta}$-neat. Then $\Delta_{h\circ f,\Delta}\geq \Delta_{h,\Delta_{f,\Delta}}$ (i.e: the difference is effective).
   
   In particular, if $(Z,\Delta_{h,\Delta_{f,\Delta}})$ is of general type, so is $(Z,\Delta_{h\circ f,\Delta})$.
\end{lemma}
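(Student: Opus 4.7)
The plan is to imitate, level by level, the divisor-by-divisor multiplicity computation used to prove Lemma~\ref{>}, now tracking the $\Delta$-coefficients on $X$ throughout. Fix a prime divisor $E\subset Z$. Since the coefficient of $E$ in an orbifold boundary is the monotone increasing function $1-1/m$ of the associated multiplicity, it suffices to show $m_{h\circ f,\Delta}(E)\geq m_{h,\Delta_{f,\Delta}}(E)$.

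I decompose the pullbacks at both levels,
$$
h^*(E)=\sum_k t_k F_k+R_h, \qquad f^*(F_k)=\sum_r s_{k,r}\,G_{k,r}+R_{f,k},
$$
with $R_h$ an $h$-exceptional divisor on $Y$, $R_{f,k}$ an $f$-exceptional divisor on $X$, the $F_k$ the prime divisors of $Y$ surjecting onto $E$ via $h$, and the $G_{k,r}$ the prime divisors of $X$ surjecting onto $F_k$ via $f$. Substitution yields
$$
(h\circ f)^*(E)=\sum_{k,r}t_k s_{k,r}\,G_{k,r}+\sum_k t_k R_{f,k}+f^*(R_h),
$$
and the last term $f^*(R_h)$ is $(h\circ f)$-exceptional because the image of $R_h$ under $h$ already has codimension at least two in $Z$, so the same is true after applying $f$.

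The key input is the definition $m_{f,\Delta}(F_k)=\inf_r s_{k,r}\,m_\Delta(G_{k,r})$. For a component $G_{k,r}$ of the leading sum, which surjects onto $E$ via $h\circ f$, its coefficient in $(h\circ f)^*(E)$ is exactly $t_k s_{k,r}$, whence
$$
t_k s_{k,r}\,m_\Delta(G_{k,r}) \;\geq\; t_k\,m_{f,\Delta}(F_k) \;\geq\; \inf_k t_k\,m_{f,\Delta}(F_k) \;=\; m_{h,\Delta_{f,\Delta}}(E).
$$
Taking the infimum of $u_G\,m_\Delta(G)$ over all non-$(h\circ f)$-exceptional components $G$ of $(h\circ f)^*(E)$ then produces the required inequality, exactly as in the proof of Lemma~\ref{>}. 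The final assertion on general type is immediate from the effectivity of $\Delta_{h\circ f,\Delta}-\Delta_{h,\Delta_{f,\Delta}}$, via $\kappa(Z,K_Z+\Delta_{h\circ f,\Delta})\geq\kappa(Z,K_Z+\Delta_{h,\Delta_{f,\Delta}})=\dim Z$.

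The step that requires the most care is the treatment of non-$(h\circ f)$-exceptional components coming from the middle term $\sum_k t_k R_{f,k}$: these are $f$-exceptional divisors $G\subset X$ whose $(h\circ f)$-image is nevertheless all of $E$. The $\Delta$-neatness of $f$ and the $\Delta_{f,\Delta}$-neatness of $h$ should ensure that $u_G\,m_\Delta(G)\geq m_{h,\Delta_{f,\Delta}}(E)$ persists for such $G$, by applying the same extremal inequality to the particular $F_{k_0}$ with $f(G)\subset F_{k_0}$ and noting that $G$ contributes to $R_{f,k_0}$ with some positive multiplicity; this is the direct analogue of the implicit step already present in the proof of Lemma~\ref{>}.
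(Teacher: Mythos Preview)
Your core argument---decomposing $h^*(E)$ and $f^*(F_k)$, substituting, and bounding each product $t_k\,s_{k,r}\,m_\Delta(G_{k,r})$ below by $m_{h,\Delta_{f,\Delta}}(E)$---is exactly the paper's proof, with somewhat cleaner bookkeeping. The paper handles only the components $G$ with $f(G)=F_k$ for some $k$, observes that their contribution to $m_{h\circ f,\Delta}(E)$ is at least $t_k\,m_{\Delta_{f,\Delta}}(F_k)$, and concludes; it never mentions the case you isolate in your final paragraph.

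That final paragraph flags a genuine subtlety which the paper's proof also leaves implicit: an $f$-exceptional divisor $G\subset f^{-1}(F_{k_0})$ can, when $h$ has positive-dimensional fibres, still dominate $E$ under $h\circ f$ and hence enter the infimum defining $m_{h\circ f,\Delta}(E)$. For such $G$ one would need $u_G\,m_\Delta(G)\ge m_{h,\Delta_{f,\Delta}}(E)$, but $G$ is excluded from the infimum defining $m_{f,\Delta}(F_{k_0})$, so the chain of inequalities you wrote for the $G_{k,r}$ does not apply to it. Your appeal to neatness (``should ensure \ldots'') is a pointer, not an argument: condition~(3) of $\Delta$-neatness together with the convention that $u_0$-exceptional divisors carry sufficiently large $\Delta$-multiplicities is the natural mechanism, but you have not actually carried out the bound. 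The paper's proof is equally silent on this point, so you are not behind it; but if you want the argument to be airtight, this is the step to make precise.
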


The proof is the same as the proof of Lemma \ref{>}, by adding the contribution of $\Delta$. Just the notations are more involved.

\begin{proof} Let $E\subset Z$ be a prime divisor, and $h^*(E)=\sum_kt_k.F_k+R_h$, with $R_h$ an $h$-exceptional divisor, and $h(F_k)=E,\forall k$. We thus have: $(h\circ f)^*(E)=f^*(\sum_kt_k.f^*(F_k))+f^*(R_h)$. The last term is $(h\circ f)$-exceptional. If $G\subset X$ is an irreducible divisor such that $f(G)=F_k$ for some $k$ appearing in the first term of the preceding sum, its multiplicity in $\Delta_{h\circ f,\Delta}$ is at least $t_k. m_{\Delta_{f,\Delta}}(F_k)$, since $m_{\Delta_{f,\Delta}}(F_k):=inf_{r}\{s_{G_r}.m_{\Delta}(G_r)\}$, where $f^*(F_k)=\sum_{r} s_r.G_r+R_{f,k}$, where $R_{f,k}$ is $f$-exceptional, and $r$ is the set of prime divisors $G_r\subset X$ which are mapped onto $F_k$ by $f$. Hence the claim.
\end{proof}


\begin{thebibliography}{66}





\bibitem{AC} E. Amerik, F. Campana. \emph{Specialness and isotriviality for regular algebraic foliations.} Ann. Inst. Fourier (Grenoble) 68 (2018), no. 7, 2923-2950



\bibitem{Ca04}F. Campana. Orbifolds, special varieties, and classification theory. Ann. Inst. Fourier 54. (2004), 499-630.

\bibitem{Ca07}F. Campana. Orbifoldes g\'eom\'etriques sp\'eciales et classification bim\'eromorphe des vari\'et\'es k\"ahl\'eriennes compactes. J. Inst. Math. de Jussieu (10), 2011, 809-934.

\bibitem{Ca17} F. Campana. Orbifold slope-rational connectedness. arXiv:1607.07829.


\bibitem{CP19} F.Campana-M. P\u aun. Foliations with positive slopes and birational stability of orbifold cotangent bundles. Publ.IHES 129 (2019, 1-49. 


\bibitem{Cl} B. Claudon. Positivité du cotangent logarithmique et conjecture de Shafarevich-Viehweg, Séminaire Bourbaki (2015).


\bibitem{JK} K. Jabbusch-S. Kebekus. Positive sheaves of differentials coming from coarse moduli spaces. Ann. Inst. Fourier 61 (2011), 2277-2290.

\bibitem{kaw'}Y. Kawamata, Characterization of abelian varieties, Compositio Math. 43 (1981), 253–276.


\bibitem{Kaw} Y. Kawamata. Minimal models and the Kodaira dimension of algebraic fiber spaces. Journal f\"ur die Reine und Angewandte Mathematik, vol. 1985, no. 363, 1985, pp. 1-46.


\bibitem{Mae} K. Maehara, The weak 1-positivity of direct image sheaves, J. Reine Angew. Math. 364 (1986), 112–129.


\bibitem{MP} Meng-M. Popa. F. Meng-M. Popa. Kodaira dimensions of fibrations over Abelian varieties. arXiv:2111.14165.

\bibitem{Pop14} M. Popa. Conjectures on the Kodaira Dimension.arXiv:2111.10900.  

\bibitem{PS17}  M. Popa-C. Schnell.Viehweg's hyperbolicity conjecture for families with maximal variation. Inv. Math. 208 (2017), 677-713.

\bibitem{PS22} M. Popa-C. Schnell. On the behaviour of Kodaira dimension under smooth morphisms. arXiv:2202.02825.


\bibitem{T21} B. Taji. Birational geometry of smooth families of varieties admitting good minimal models. arXiv:2005. 01025v3.




\bibitem{V}E. Viehweg. Weak positivity and the additivity of the Kodaira dimension for certain fibre spaces.
Proc. Algebraic Varieties and Analytic Varieties, Tokyo 1981. Adv. Studies in Math.
1, Kinokunya–North-Holland Publ. 1983, 329 - 353.

\bibitem{V'}E. Viehweg. Quasi-projective Moduli for Polarised Manifolds. Ergebnisse der Mathematik. 30. Springer-Verlag (1995)



\bibitem{VZ00}  E. Viehweg-K. Zuo. Base spaces of non-isotrivial families of smooth minimal models. (Complex Geometry, G\"ottingen), Springer Berlin, 2002, p. 279-328.

\bibitem{VZ01} E. Viehweg-K. Zuo. On the isotriviality of families of projective manifolds over curves.J. Algebraic Geom. 10 (2001), no. 4, 781–799.










\end{thebibliography}
\end{document}